\numberwithin{equation}{section}
\definecolor{darkcyan}{rgb}{0.0, 0.55, 0.55}
\newcommand{\Hil}[0]{
\mathcal{H}
}
\newcommand{\B}[0]{{\mathcal{B}}}
\newtheorem{thm}{Theorem}[section]
\newtheorem{defin}[thm]{Definition}
\newtheorem{prop}[thm]{Proposition}
\newtheorem{lem}[thm]{Lemma}
\newtheorem{cor}[thm]{Corollary}
\newtheorem{rem}[thm]{Remark}
\newtheorem{conj.}[thm]{Conjecture}
\newtheorem{Bsp.}[thm]{Example}
\newcommand{\Hc}{\mathcal H}
\newcommand{\Bc}{\mathcal B}
\newcommand{\Tc}{\mathcal T}
\newcommand{\norm}[1]{\left\|#1\right\|}
\newcommand{\limn}{\lim_{n \to \infty}}
\newcommand{\jbr}[1]{\langle#1\rangle}
\newcommand{\ljbr}[1]{\left\langle#1\right\rangle}
\newcommand{\spanl}{\operatorname{span}}
\newcommand{\brac}[1]{\left(#1\right)}
\newcommand{\pw}{\text{pw}}
\newcommand{\id}{\text{id}}
\newcommand{\abs}[1]{\left|#1\right|}
\newcommand{\R}{\mathbb R}
\newcommand{\N}{\mathbb N}
\newcommand{\C}{\mathbb C}
\def\BibTeX{{\rm B\kern-.05em{\sc i\kern-.025em b}\kern-.08em
    T\kern-.1667em\lower.7ex\hbox{E}\kern-.125emX}}
\begin{document}

\title{Details on the distribution co-orbit space $\Hil^{\infty}_w$}

\author{\IEEEauthorblockN{ Nikolas Hauschka\IEEEauthorrefmark{1}, Peter Balazs\IEEEauthorrefmark{1}\IEEEauthorrefmark{2}, and Lukas Köhldorfer\IEEEauthorrefmark{1}}
\IEEEauthorblockA{\IEEEauthorrefmark{1} \textit{Acoustics Research Institute}, \textit{Austrian Academy of Sciences}, Vienna, Austria.}
\IEEEauthorblockA{\IEEEauthorrefmark{2} \textit{Interdisciplinary Transformation University Austria (IT:U)}, Linz, Austria}
\emph{nikolas.hauschka@univie.ac.at,\{peter.balazs,lukas.koehldorfer\}@oeaw.ac.at}
\thanks{This work is supported by the FWF project (LoFT) P 34624 .}
}

\maketitle

\begin{abstract}
Associated with every separable Hilbert space $\Hil$ and a given localized frame, there exists a natural test function Banach space $\Hil^1$ and a Banach distribution space $\Hil^{\infty}$ so that $\Hil^1 \subset \Hil \subset \Hil^{\infty}$. In this article we close some gaps in the literature and rigorously introduce the space $\Hil^{\infty}$ and its weighted variants $\Hc_w^\infty$ in a slightly more general setting and discuss some of their properties. In particular, we compare the underlying weak$^*$- with the norm topology associated with $\Hc_w^\infty$ and show that $(\Hc_w^\infty, \Vert \cdot \Vert_{\Hc_w^\infty})$ is a Banach space.
\end{abstract}

\begin{IEEEkeywords}
Localized frame, co-orbit space, distribution space
\end{IEEEkeywords}

\section{Introduction}
The theory of distributions (or generalized functions) has become indispensable in modern mathematics, physics and engineering, and provides a suitable abstract framework for the analysis of various problems and the formalization of many phenomena. A classical example of a distribution space is the space $\mathcal{S}'(\mathbb{R}^d)$ of tempered distributions, i.e. the (anti-linear) topological dual of the space $\mathcal{S}(\mathbb{R}^d)$ of Schwartz functions on $\mathbb{R}^d$. Since $\mathcal{S}(\mathbb{R}^d) \subset L^2(\mathbb{R}^d)\subset \mathcal{S}'(\mathbb{R}^d)$ is 'the' prototypical Gelfand triple \cite{Antoine1998}. Since both $\mathcal{S}(\mathbb{R}^d)$ and $L^2(\mathbb{R}^d)$ are invariant under the Fourier transform, the space of tempered distributions is well suited for extending Fourier theory from $L^2(\mathbb{R}^d)$ to the distributional setting, and provides a valuable framework for analyzing various kind of PDEs \cite{Hörmander1990}. On the other hand, $\mathcal{S}(\mathbb{R}^d)$ is not a Banach space and $\mathcal{S}'(\mathbb{R}^d)$ not even a Fr\'echet space \cite{2021arXiv210611287B}, which sometimes makes working with these objects a bit tedious. If one is less oriented towards derivatives, but more interested in time-frequency analysis and applications to quantum physics, the \emph{Feichtinger algebra} $\mathbf{S}_0(\mathbb{R}^d)$ \cite{Feichtinger1981} and its dual space $\mathbf{S}'_0(\mathbb{R}^d)$, sometimes called the space of \emph{mild distributions} \cite{fei20}, serve as an indisputably useful alternative to the latter, see also \cite{Gosson21}. The space $\mathbf{S}_0(\mathbb{R}^d)$ is defined as the space of all elements in $L^2(\mathbb{R}^d)$, whose \emph{short-time Fourier transform} \cite{gr01} with respect to the Gaussian (or any other Schwartz function) is in $L^1(\mathbb{R}^{2d})$. In fact, $\mathbf{S}_0(\mathbb{R}^d)$ is not only a Banach space, but even a Banach algebra under both convolution and pointwise multiplication, and contains $\mathcal{S}(\mathbb{R}^d)$ as a norm-dense subspace. Thus $\mathbf{S}'_0(\mathbb{R}^d)$ is a Banach space of distributions contained in $\mathcal{S}'(\mathbb{R}^d)$. In particular, one can show that $\mathbf{S}_0(\mathbb{R}^d) \subset L^2(\mathbb{R}^d)\subset \mathbf{S}'_0(\mathbb{R}^d)$ (such a triple is called a \emph{Banach-Gelfand triple}), and since $\mathbf{S}_0(\mathbb{R}^d)$ is invariant under both the Fourier transform and actions of time-frequency shifts, $(\mathbf{S}_0(\mathbb{R}^d), \mathbf{S}'_0(\mathbb{R}^d))$ is widely considered as \emph{the} appropriate test function-/distribution space pair for time-frequency analysis and applications to quantum physics \cite{Gosson21}. It can be extended to a full range of space, the so-called modulation spaces \cite{gr01,fe06}.

In this article we consider a generalization of the Banach distribution space $\mathbf{S}'_0(\mathbb{R}^d)\supset L^2(\mathbb{R}^d)$ to the abstract Hilbert space setting via localized frames. In analogy to a characterization of $\mathbf{S}_0(\mathbb{R}^d)$ via Gabor frames \cite{gr01}, Fornasier and Gröchenig defined general co-orbit spaces $\Hil_w^p$ ($1\leq p\leq \infty$) associated with a localized frame in a given Hilbert space $\Hil$ \cite{forngroech1} to get a notion of 'nice' frames. This lead to interesting research \cite{gr03-4,futa09} and allowed the definition of co-orbit spaces \cite{daforastte08,fuvo15}, but without a necessary group structure.  In fact, for reasonable weights $w$, $\Hil^1_{1/w} \subset \Hil \subset \Hil_w^{\infty}$ is a Banach-Gelfand triple, which (up to norm equivalence) corresponds to the triple $\mathbf{S}_0(\mathbb{R}^d) \subset L^2(\mathbb{R}^d)\subset \mathbf{S}'_0(\mathbb{R}^d)$ in the case of certain Gabor frames \cite{forngroech1}. While the spaces $\Hil_w^p$ ($1\leq p <\infty$) and their properties have been studied in detail in e.g.\cite{forngroech1}, a rigorous discussion of the space $\Hil_w^{\infty}$ in \cite{forngroech1} was omitted by the authors \emph{"to avoid tedious technicalities"}. In the book chapter \cite{xxlgro14}, the authors gave a more detailed presentation of the space $\Hil_w^{\infty}$, still leaving some details to be filled. The purpose of this article is to close these gaps in the literature and give a detailed presentation of the space $\Hil_w^{\infty}$ and its topologies under minimal assumptions. To understand the properties of $\Hil^{\infty}_w$ in detail will be very helpful for e.g. describing operators between co-orbit spaces, in particluar elements from $\mathcal{B}(\Hil^{\infty},\Hil^1)$ \cite{bytchenkoff2023outer,feizim1}.     
\section{Preliminaries}
Let $\Hc$ be a separable $\C$-Hilbert space with inner product $\jbr{\cdot,\cdot}$ being conjugate linear in the second slot and let $\norm\cdot$ be the induced norm. The set $X$ is assumed to be countable and is used as the index set for the $\ell^p$ spaces. A weight $w$ is defined to be a map from $X$ to $\R_{>0}$. For a sequence $(\alpha_k)_{k \in X}$, the weighted $\ell^p$-norm is given by
$$\norm{(\alpha_k)_{k \in X}}_{\ell_w^p} := \norm{(\alpha_kw(k))_{k \in X}}_{\ell^p}.$$
The space $\ell_w^p$ consists of all sequences with a finite $\ell_w^p$-norm and is a Banach space. For the pointwise topology, we use the notation $\pw$.

\subsection{A certain locally convex topology}

Let $V$ be a $\C$-vector space with a countable Hamel basis. Let $\Tc$ be another $\C$-vector space and assume that $(\Tc,V)$ is a dual pair \cite{MESSERSCHMIDT20152018} with associated non-degenerate sesquilinear form $\jbr{\cdot, \cdot}_{\Tc, V}$ (being conjugate-linear in the second slot). We equip $\Tc$ with the Hausdorff locally convex $\sigma(\Tc, V)$-topology generated by the seminorms $\{|\jbr{\cdot, v}_{\Tc, V}|: v \in V\}$. Equivalently, we can replace $V$ by a Hamel basis, to get countably many seminorms generating the same topology, so that $\Tc$ is metrizable. Let $\overline \Tc$ denote the topological completion \cite{Merkle1998} of $\Tc$, which is again a metrizable Hausdorff locally convex space. Since this construction is fundamental for our setting we present some details. First, we make sure that the sesquilinear form $\jbr{\cdot, \cdot}_{\Tc, V}$ can be uniquely extended. For each $v \in V$, the linear functional
$$l_v:\Tc \to \C, \quad l_v(x) := \jbr{x, v}_{\Tc, V}.$$
is continuous, hence there exists a unique continuous linear extension $\overline l_v:\overline\Tc \to \C$ \cite[Theorem 5.1]{treves2016topological}. In particular, we may define the extended sesquilinear form $\jbr{\cdot,\cdot}_{\overline\Tc,V}$ on $\overline\Tc \times V$ by
$$\jbr{x,v}_{\overline\Tc,V} := \overline l_v(x).$$
We easily check for conjugate linearity on the second slot. 
Let $(x_n)_{n=1}^{\infty} \subset \Tc$ be a sequence converging to some $x \in \overline\Tc$. Then
$$\begin{aligned}
    \jbr{x,\lambda u+v}_{\overline\Tc, V} &= \lim_{n \to \infty}\jbr{x_n,\lambda u+v}_{\Tc, V}\\
    &= \overline\lambda\lim_{n \to \infty}\jbr{x_n, u}_{\Tc, V}+\lim_{n \to \infty}\jbr{x_n,v}_{\Tc, V}\\
    &= \overline\lambda\jbr{x, u}_{\overline\Tc, V}+\jbr{x,v}_{\overline\Tc, V}.
\end{aligned}$$
Finally, we make sure that our extended sesquilinear form is still non-degenerate. Suppose $\jbr{x,v}_{\overline\Tc,V} = 0$ for all $v \in V$. Let $(x_n)_{n=1}^{\infty}\subseteq \Tc$ converge to $x \in \overline\Tc$. Then 
$$0 = \jbr{x,v}_{\overline\Tc,V} = \lim_{n\to \infty} \jbr{x_n,v}_{\Tc, V} \quad (\forall v\in V),$$
so $(x_n)_{n=1}^{\infty}$ converges to $0$ with respect to $\sigma(\Tc,V)$, implying $x = 0$, since limits in Hausdorff spaces are unique.

\subsection{Frames}

A countable family $\psi = (\psi_k)_{k \in X}$ of vectors in $\Hil$ is called a \emph{frame}, if 
$$\sum_{k \in X} \abs{\jbr{f, \psi_k}}^2 \asymp \norm f^2 \qquad  (\forall f \in \Hc).$$
We refer to \cite{ole1n} for a comprehensive introduction to frame theory. Whenever $\psi$ is frame, both the \emph{coefficient operator} (or \emph{analysis operator}), defined as
$$C_{\psi} : \Hil \to  \ell^2 ,\quad C_\psi f= \big(\langle f   ,   \psi_k \rangle\big)_{k \in X}$$
and the \emph{synthesis operator}, defined as
\begin{equation*}
D_\psi : \ell^2 \to \Hil ,\quad D_\psi c =  \sum_{k \in X} c_k \psi_k,
\label{eq:Synthesenoperator}
\end{equation*}
are bounded and adjoint to one another, where the latter series converges unconditionally in $\Hil$. Additionally, $D_{\psi}$ is surjective. Their composition yields the \emph{frame operator}
$$S_\psi  = D_\psi C_\psi: \Hil \rightarrow \Hil ,\quad S_\psi f = \sum_{k \in X} \langle f   ,   \psi_k \rangle  \psi_k,$$
which is bounded, positive, self-adjoint and invertible. Composing the frame operator with its inverse (and vice versa) yields the \emph{frame reconstruction} formulae 
\begin{equation}\label{framerec}
f= \sum_{k \in X} \langle f, \tilde{\psi}_k \rangle  \psi_k  = \sum_{k \in X} \langle f, \psi_k \rangle  \tilde{\psi}_k \qquad  (\forall f\in \Hil).
\end{equation}
The family $\tilde{\psi} = \big(\tilde{\psi}_k \big)_{k \in X} := \big( S_{\psi}^{-1} \psi_k \big)_{k \in X}$ is a frame as well and called the \emph{canonical dual frame}. More generally, if $\psi^d = (\psi^d)_{k\in X}$ is another frame in $\Hil$ such that (\ref{framerec}) holds after replacing each $S_{\psi}^{-1} \psi_k$ with $\psi^d_k$, then $\psi^d$ is called a \emph{dual frame} of $\psi$. Finally, the \emph{cross Gram matrix} $G_{\tilde\psi,\psi}$ associated with two frames $\psi$ and $\tilde\psi$ is given by
$$G_{\tilde\psi,\psi}  = \big[ \langle \psi_{l} , \tilde\psi_k \rangle \big]_{k,l \in X}.$$
In fact, $G_{\tilde\psi,\psi}$ defines a bounded operator on $\ell^2$ and 
$$G_{\tilde\psi,\psi}=C_{\tilde\psi} D_\psi .$$

%
%
%
%
%

\section{Details on the topologies on $\Hil^{\infty}_w$}

In order to introduce the co-orbit spaces $\Hc_w^\infty$ we fix the following assumptions for the remainder of this article:
\begin{itemize}
    \item[(1)] $\psi$ is a frame for $\Hil$ and $\tilde\psi$ a dual frame.
    \item[(2)] The cross Gram matrix $G_{\tilde\psi,\psi}$ defines a bounded operator on $\ell_w^{\infty}$ for some fixed weight $w$.
\end{itemize}
We explicitly emphasize that conditions (1$-$2) are met for any weight $w$ whenever $\psi$ is a Riesz basis and $\tilde\psi$ its canonical dual Riesz basis (and in particular, when $\psi = \tilde\psi$ is an orthonormal basis). Further examples of pairs of dual frames $(\tilde\psi,\psi)$ and weights $w$ satisfying (1$-$2) are given by an intrinsically localized frame and its canonical dual frame, where $w$ is a so-called \emph{admissible weight}, see \cite{forngroech1} and the references therein. 
   
\

Now let $\Hc^{00} := \spanl (\tilde\psi_k)_{k \in X} = D_{\tilde\psi}(c_{00})$. Since $c_{00}$ a dense subspace of $\ell^2$ and $D_{\tilde\psi}:\ell^2 \to \Hil$ bounded and onto, we have that $\Hc^{00}$ is a dense subspace of $\Hil$.  Consequently, $(\Hil,\Hil^{00})$ is a dual pair with associated non-degenerate sesquilinear form $\langle \cdot, \cdot\rangle$ restricted to $\Hc \times \Hc^{00}$. Note, that the dual frame $\tilde\psi$ is a countable Hamel basis for $\Hil^{00}$. Next, let $\overline \Hc$ be the completion of $\Hc$ with respect to the $\sigma(\Hc, \Hc^{00})$-topology, with induced sesquilinear form $\jbr{\cdot, \cdot}_{\overline\Hc, \Hc^{00}}$. Then, we define $\Hc_w^\infty$ as the subspace of all $f \in \overline\Hc$, for which there exists a sequence $(f_n)_{n=1}^{\infty} \subseteq \Hc$ satisfying
$$\limn^{\sigma(\overline\Hc,\Hc^{00})}f_n = f \, \text{ and } \, \big\vert \jbr{f_n,\tilde\psi_k}\big\vert w(k) \lesssim 1 \, \,  (\forall k \in X, n \in \N).$$
Let $\jbr{\cdot, \cdot}_{\Hc_w^\infty, \Hc^{00}}$ be the restriction of $\jbr{\cdot, \cdot}_{\overline\Hc, \Hc^{00}}$ to $\Hil^{\infty}_{w} \times \Hil^{00}$. 

\subsection{The coefficient operator}

Having defined our space, we can easily define the coefficient operator with respect to $\tilde\psi$.

\begin{defin}
We define the coefficient operator as
$$C_{\tilde\psi}:\Hc_w^\infty \to \ell_w^\infty, \quad f \mapsto \big(\jbr{f,\tilde\psi_k}_{\Hc_w^\infty, \Hc^{00}}\big)_{k \in X}.$$ 
\end{defin}

By the definition of $\Hc_w^\infty$ this operator is well-defined and easily seen to be linear.

\begin{prop}
    The coefficient operator is injective.
\end{prop}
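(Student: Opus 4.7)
The plan is to exploit two facts that are already established in the excerpt: first, that $\tilde\psi = (\tilde\psi_k)_{k\in X}$ is a (countable) Hamel basis of $\Hc^{00}$, and second, that the extended sesquilinear form $\jbr{\cdot,\cdot}_{\overline\Hc,\Hc^{00}}$ is non-degenerate. Since $\Hc_w^\infty \subseteq \overline\Hc$ and $\jbr{\cdot,\cdot}_{\Hc_w^\infty,\Hc^{00}}$ is just the restriction of the extended form, non-degeneracy on $\overline\Hc$ transfers to the subspace.

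So I would start with $f \in \Hc_w^\infty$ in the kernel of $C_{\tilde\psi}$, i.e.\ $\jbr{f,\tilde\psi_k}_{\Hc_w^\infty,\Hc^{00}} = 0$ for every $k \in X$. The first step is to promote this from the basis vectors to all of $\Hc^{00}$: any $v \in \Hc^{00}$ is, by the Hamel basis property, a finite linear combination $v = \sum_{j=1}^N c_j \tilde\psi_{k_j}$, and by conjugate linearity of the form in the second slot,
$$\jbr{f,v}_{\overline\Hc,\Hc^{00}} = \sum_{j=1}^N \overline{c_j}\,\jbr{f,\tilde\psi_{k_j}}_{\overline\Hc,\Hc^{00}} = 0.$$
The second and final step is to invoke non-degeneracy of $\jbr{\cdot,\cdot}_{\overline\Hc,\Hc^{00}}$, which was verified explicitly in the preliminary construction of $\overline\Hc$, to conclude that $f = 0$ in $\overline\Hc$ and hence in $\Hc_w^\infty$.

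There is really no obstacle here; the whole point of having set up $(\overline\Hc,\Hc^{00})$ as a dual pair with a non-degenerate extended form, and of observing that $\tilde\psi$ spans $\Hc^{00}$, was to make this kind of injectivity statement essentially automatic. The only thing to be slightly careful about is to use the \emph{extended} form (on $\overline\Hc \times \Hc^{00}$) rather than the original form (on $\Hc \times \Hc^{00}$), since an arbitrary $f \in \Hc_w^\infty$ need not lie in $\Hc$ itself.
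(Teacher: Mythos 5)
Your proof is correct and follows the same route as the paper's: reduce to $\jbr{f,v}_{\overline\Hc,\Hc^{00}}=0$ for all $v\in\Hc^{00}$ via the spanning property of $\tilde\psi$, then invoke non-degeneracy of the extended form in the first slot. You simply spell out the finite-linear-combination step and the distinction between the original and extended form more explicitly than the paper does.
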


\begin{proof}
    Suppose $C_{\tilde\psi}f = 0$ for some $f \in \Hc_w^\infty$. Then $\jbr{f,\tilde\psi_k}_{\Hc_w^\infty, \Hc^{00}} = 0$ for all $k \in X$. Since the frame elements $\tilde\psi_k$ span $\Hc^{00}$, we must have $f = 0$, since $\jbr{\cdot, \cdot}_{\Hc_w^\infty, \Hc^{00}}$ is non-degenerate with respect to the first slot. 
\end{proof}


%
Since $\norm\cdot_{\ell_w^\infty}$ is a norm and the coefficient operator, as defined above, is injective, we immediately obtain the following.
%
%
%

\begin{cor}
The map 
$$\norm{\cdot}_{\Hc_w^\infty}: \Hc_w^\infty \to \R, \quad \norm f_{\Hc_w^\infty} := \big\Vert C_{\tilde\psi} f \big\Vert_{\ell_w^\infty}$$   
defines a norm on $\Hc_w^\infty$. In particular, $C_{\tilde\psi}$ is an isometry.
\end{cor}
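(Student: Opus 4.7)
The plan is to verify the four norm axioms (finiteness, absolute homogeneity, triangle inequality, and definiteness) by pulling everything back through $C_{\tilde\psi}$ to the genuine norm $\Vert\cdot\Vert_{\ell_w^\infty}$, exploiting linearity of $C_{\tilde\psi}$ and the injectivity already established in the preceding proposition.

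The only genuinely nontrivial step is \emph{finiteness}, i.e.\ showing $C_{\tilde\psi} f \in \ell_w^\infty$ for each $f \in \Hc_w^\infty$. My first move is to unfold the definition of $\Hc_w^\infty$: there exists a sequence $(f_n)_{n=1}^\infty \subset \Hc$ with $f_n \to f$ in the $\sigma(\overline\Hc,\Hc^{00})$-topology and a constant $M > 0$ (uniform in $n$ and $k$) such that $|\jbr{f_n, \tilde\psi_k}|\, w(k) \leq M$. By the construction of the extended sesquilinear form $\jbr{\cdot,\cdot}_{\overline\Hc,\Hc^{00}}$ recalled in the Preliminaries (continuity of each $\overline{l}_{\tilde\psi_k}$), we have
$$\jbr{f,\tilde\psi_k}_{\overline\Hc,\Hc^{00}} = \lim_{n\to\infty} \jbr{f_n,\tilde\psi_k} \quad (\forall k \in X),$$
so multiplying by $w(k)$ and passing to the limit preserves the uniform bound $M$. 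Taking the supremum over $k \in X$ gives $\Vert C_{\tilde\psi} f\Vert_{\ell_w^\infty} \leq M < \infty$.

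With finiteness in hand, homogeneity and the triangle inequality follow instantly from the linearity of $C_{\tilde\psi}$ (noted just before the corollary) together with the fact that $\Vert\cdot\Vert_{\ell_w^\infty}$ is a norm on $\ell_w^\infty$. Definiteness follows from the previous proposition: if $\Vert f\Vert_{\Hc_w^\infty} = 0$ then $C_{\tilde\psi} f = 0$ in $\ell_w^\infty$, and injectivity of $C_{\tilde\psi}$ forces $f = 0$. The isometry claim is then the very definition of the norm, $\Vert f\Vert_{\Hc_w^\infty} = \Vert C_{\tilde\psi} f\Vert_{\ell_w^\infty}$, so no additional work is required.

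The main (and essentially only) obstacle is the interchange of limits and sup in the finiteness step, which is handled cleanly by the pointwise-in-$k$ continuity of the functionals $\overline{l}_{\tilde\psi_k}$ that was established when $\jbr{\cdot,\cdot}_{\overline\Hc,\Hc^{00}}$ was introduced; everything else is a formal consequence of linearity and the known norm axioms on $\ell_w^\infty$.
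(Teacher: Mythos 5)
Your proposal is correct and follows essentially the same route as the paper: the paper dispatches this corollary in one line as an immediate consequence of $\Vert\cdot\Vert_{\ell_w^\infty}$ being a norm and $C_{\tilde\psi}$ being linear and injective, which is exactly your argument for homogeneity, the triangle inequality, and definiteness. Your extra care over finiteness (that $C_{\tilde\psi}f$ actually lands in $\ell_w^\infty$) is the well-definedness of the coefficient operator, which the paper records in the remark immediately after its definition rather than inside this corollary, so nothing is genuinely different.
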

%

\begin{prop}\label{normbounded}
    For every $f \in \Hc_w^\infty$ there exists a $\norm\cdot_{\Hc_w^\infty}$-bounded sequence $(f_n)_{n = 1}^\infty \subseteq \Hc_w^\infty\cap\Hc$ converging to $f$ with respect to $\sigma(\Hc_w^\infty, \Hc^{00})$.
    \label{Norm bounded sequence converging weakly*}
\end{prop}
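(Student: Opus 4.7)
The plan is to show that the sequence already guaranteed by the definition of $\Hc_w^\infty$ has the three required properties, so essentially no further construction is needed; the whole argument is an unwrapping of the definition.

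First, by the definition of $\Hc_w^\infty$, for the given $f \in \Hc_w^\infty$ there exists a sequence $(f_n)_{n=1}^\infty \subseteq \Hc$ with $f_n \to f$ in $\sigma(\overline\Hc,\Hc^{00})$ and a constant $C>0$ satisfying $|\jbr{f_n,\tilde\psi_k}|\,w(k) \le C$ for all $n\in\N$ and $k\in X$. I would then verify that each $f_n$ lies in $\Hc_w^\infty\cap\Hc$: for fixed $n$, the constant sequence $(f_n,f_n,\ldots)\subseteq\Hc$ converges trivially to $f_n$ in $\sigma(\overline\Hc,\Hc^{00})$, and the uniform bound above immediately supplies the pointwise estimate required in the definition, so $f_n \in \Hc_w^\infty$.

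Next, I would compute the norm. Using the fact established earlier that the extended sesquilinear form $\jbr{\cdot,\cdot}_{\overline\Hc,\Hc^{00}}$ restricts to the Hilbert-space inner product on $\Hc \times \Hc^{00}$ — because $\overline l_v$ is a continuous extension of the continuous functional $l_v$ and must therefore agree with $l_v$ on $\Hc$ — we obtain
$$\norm{f_n}_{\Hc_w^\infty} = \norm{C_{\tilde\psi}f_n}_{\ell_w^\infty} = \sup_{k\in X}|\jbr{f_n,\tilde\psi_k}|\,w(k) \le C,$$
so the sequence is $\norm{\cdot}_{\Hc_w^\infty}$-bounded by $C$.

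Finally, the convergence $f_n \to f$ in $\sigma(\Hc_w^\infty,\Hc^{00})$ is automatic: both $\sigma(\Hc_w^\infty,\Hc^{00})$ and $\sigma(\overline\Hc,\Hc^{00})$ are generated by the same family of seminorms $\{|\jbr{\cdot,v}_{\overline\Hc,\Hc^{00}}|:v\in\Hc^{00}\}$, so the topology on $\Hc_w^\infty$ is nothing but the subspace topology inherited from $\overline\Hc$. The only mild subtlety worth stating explicitly is the identification of the extended pairing with the ordinary inner product on $\Hc$; apart from that, the argument is bookkeeping in the definition of $\Hc_w^\infty$, and I do not anticipate any genuine obstacle.
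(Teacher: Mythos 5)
Your proof is correct and follows essentially the same route as the paper's: take the sequence furnished by the definition of $\Hc_w^\infty$, use the constant sequence to see each $f_n$ lies in $\Hc_w^\infty$, and read the uniform bound as a $\norm{\cdot}_{\Hc_w^\infty}$-bound. The extra remarks you make explicit (that the extended pairing agrees with the inner product on $\Hc$, and that $\sigma(\Hc_w^\infty,\Hc^{00})$ is the subspace topology from $\sigma(\overline\Hc,\Hc^{00})$) are points the paper leaves implicit, and they are correct.
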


\begin{proof}
    By definition, there exists a sequence $(f_n)_{n=1}^{\infty} \subset \Hc$ converging to $f$ with respect to $\sigma(\overline\Hc, \Hc^{00})$ satisfying
    $$\big\vert \jbr{f_n,\tilde\psi_k}\big\vert w(k) \lesssim 1 \quad  (\forall k \in X, \forall n \in \N).$$
    For each $n\in \mathbb{N}$, we choose the constant sequence $(f_n)_{m=1}^{\infty}$ to verify that $f_n \in \Hc_w^\infty$. Indeed, 
    $$\norm{f_n}_{\Hc_w^\infty} = \sup_{k\in X} \big \vert \jbr{f_n, \tilde\psi_k}\big\vert w(k) \lesssim 1$$
    for each $n\in \mathbb{N}$, as was to be shown.
\end{proof}

For completeness reason we show that the norm topology is stronger than the $\sigma(\Hc_w^\infty, \Hc^{00})$ topology.

\begin{thm}
    If a sequence $(f_n)_{n=1}^{\infty} \subset \Hc_w^\infty$ converges in norm, then it converges with respect to $\sigma(\Hc_w^\infty, \Hc^{00})$.
    \label{Normconvergence implies weak* convergence}
\end{thm}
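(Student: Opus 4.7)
The plan is to compare the two topologies directly, leveraging that $\Hc^{00} = \spanl(\tilde\psi_k)_{k \in X}$ in the algebraic sense, so that the seminorms $\{\abs{\jbr{\cdot, v}_{\Hc_w^\infty, \Hc^{00}}} : v \in \Hc^{00}\}$ generating the $\sigma(\Hc_w^\infty, \Hc^{00})$-topology need only be tested against \emph{finite} linear combinations of the $\tilde\psi_k$. Assume $(f_n)_{n=1}^\infty \subset \Hc_w^\infty$ converges to some $f \in \Hc_w^\infty$ in norm, i.e.,
$$\sup_{k \in X} \abs{\jbr{f_n - f, \tilde\psi_k}_{\Hc_w^\infty, \Hc^{00}}} w(k) = \norm{f_n - f}_{\Hc_w^\infty} \to 0.$$

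First, I would fix an arbitrary $v \in \Hc^{00}$ and write it as $v = \sum_{k \in F} c_k \tilde\psi_k$ for some finite $F \subset X$ and scalars $(c_k)_{k \in F}$. Using that $\jbr{\cdot, \cdot}_{\Hc_w^\infty, \Hc^{00}}$ is conjugate-linear in the second slot, one expands
$$\jbr{f_n - f, v}_{\Hc_w^\infty, \Hc^{00}} = \sum_{k \in F} \overline{c_k}\, \jbr{f_n - f, \tilde\psi_k}_{\Hc_w^\infty, \Hc^{00}}.$$

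Next, I would apply the triangle inequality together with the defining estimate $\abs{\jbr{f_n - f, \tilde\psi_k}_{\Hc_w^\infty, \Hc^{00}}} \leq \norm{f_n - f}_{\Hc_w^\infty}/w(k)$ to obtain
$$\abs{\jbr{f_n - f, v}_{\Hc_w^\infty, \Hc^{00}}} \leq \brac{\sum_{k \in F} \frac{\abs{c_k}}{w(k)}} \norm{f_n - f}_{\Hc_w^\infty}.$$
Since $F$ is finite and $w$ is pointwise positive, the bracketed factor is a finite constant depending only on $v$, so the right-hand side tends to $0$. As $v \in \Hc^{00}$ was arbitrary, this shows that each of the seminorms generating the $\sigma(\Hc_w^\infty, \Hc^{00})$-topology is controlled by $\norm{\cdot}_{\Hc_w^\infty}$ and that $f_n \to f$ in this weaker topology.

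I do not foresee any serious obstacle: the argument is a one-line expand-and-estimate once one recognizes that $\Hc^{00}$ is an algebraic span, so that only finite linear combinations have to be tested. The only point requiring a bit of care is the conjugate-linearity convention in the second slot, which merely contributes complex conjugation of the coefficients $c_k$ and does not affect the estimate.
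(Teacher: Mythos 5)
Your proof is correct and follows essentially the same route as the paper: both reduce to the pointwise estimate $\abs{\jbr{f_n-f,\tilde\psi_k}} \le \norm{f_n-f}_{\Hc_w^\infty}/w(k)$. The only cosmetic difference is that you explicitly expand an arbitrary $v \in \Hc^{00}$ as a finite linear combination of the $\tilde\psi_k$, whereas the paper invokes its earlier remark that the seminorms indexed by a Hamel basis already generate the $\sigma(\Hc_w^\infty,\Hc^{00})$-topology and therefore only tests against the $\tilde\psi_k$ themselves.
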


\begin{proof}
    Without loss of generality, assume that $(f_n)_{n=1}^{\infty}$ converges to $0$. This means that
    $$\limn \big\Vert C_{\tilde\psi}f_n \big\Vert_{\ell_w^\infty} = 0.$$
    This implies for each $k \in X$ that
    $$\limn \big\vert \jbr{f_n,\tilde\psi_k} \big\vert = \limn|(C_{\tilde\psi}f_n)_k| = 0,$$
    as was to be shown.
\end{proof}

\begin{rem}
    The converse of the above statement is not true. Indeed, let $X = \N$, $w=1$ and $(e_n)_{n \in \N} = (\psi_n)_{n \in \N} = (\tilde\psi_n)_{n \in \N}$ be an orthonormal basis for $\Hil$. It is a standard routine to show that the sequence $(f_n)_{n=1}^{\infty} = (\psi_n)_{n=1}^{\infty}$ converges weakly in $\Hil$ (to $0$), hence also with respect to $\sigma(\Hc_w^\infty, \Hc^{00})$, but not in norm, since
    $$\limn\norm{e_n-0}_{\Hc_w^\infty} = \limn\sup_{k \in \N}|\jbr{e_n,e_k}_{\Hc_w^\infty, \Hc^{00}}| = 1.$$ 
\end{rem}

\subsection{Completeness}

Next, we show that $(\Hc_w^\infty, \Vert \cdot \Vert_{\Hc_w^\infty})$ is a Banach space. Before we are able to do so, we need a bit of work. 


\begin{lem}\label{psilinHinfty}
For each $l\in X$, $\psi_l \in \Hc_w^\infty$. 
\end{lem}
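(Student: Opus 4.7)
The strategy is direct: since $\psi_l$ already lies in $\Hc \subseteq \overline{\Hc}$, the constant sequence $f_n := \psi_l$ is the natural candidate witnessing $\psi_l \in \Hc_w^\infty$. The convergence $f_n \to \psi_l$ in $\sigma(\overline\Hc, \Hc^{00})$ is automatic for a constant sequence, so the only nontrivial task is to check the uniform bound
$$\big|\jbr{\psi_l, \tilde\psi_k}\big| w(k) \lesssim 1 \qquad (\forall k \in X).$$

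The key observation is that the quantities $\jbr{\psi_l, \tilde\psi_k}$ are exactly the entries of the cross Gram matrix $G_{\tilde\psi,\psi}$. More precisely, applying $G_{\tilde\psi,\psi}$ to the standard unit sequence $e_l \in \ell_w^\infty$ reproduces the $l$-th column of the matrix, i.e.\ $(G_{\tilde\psi,\psi} e_l)_k = \jbr{\psi_l, \tilde\psi_k}$. Since $\|e_l\|_{\ell_w^\infty} = w(l)$ is finite, assumption (2) (boundedness of $G_{\tilde\psi,\psi}$ on $\ell_w^\infty$) gives
$$\sup_{k \in X}\big|\jbr{\psi_l, \tilde\psi_k}\big| w(k) \;=\; \big\|G_{\tilde\psi,\psi} e_l\big\|_{\ell_w^\infty} \;\leq\; \big\|G_{\tilde\psi,\psi}\big\|_{\mathrm{op}} \cdot w(l),$$
which is finite (for fixed $l$), so the required uniform bound in $k$ holds with constant depending only on $l$.

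Putting these pieces together, the constant sequence $(\psi_l)_{n=1}^\infty \subseteq \Hc$ satisfies both conditions in the definition of $\Hc_w^\infty$, yielding $\psi_l \in \Hc_w^\infty$. I do not anticipate a genuine obstacle: the argument reduces to recognizing the column of the cross Gram matrix and invoking assumption (2). The only subtlety worth stating cleanly is the identification $(G_{\tilde\psi,\psi} e_l)_k = \jbr{\psi_l, \tilde\psi_k}$, which follows directly from the definition of $G_{\tilde\psi,\psi}$ given in the preliminaries.
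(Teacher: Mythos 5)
Your proof is correct and follows essentially the same route as the paper: both reduce the claim to assumption (2), the boundedness of $G_{\tilde\psi,\psi}$ on $\ell_w^\infty$, applied to a suitable test sequence. The only (cosmetic) difference is that you feed $G_{\tilde\psi,\psi}$ the unit vector $e_l$ with $\norm{e_l}_{\ell_w^\infty}=w(l)$, which isolates the column $(\jbr{\psi_l,\tilde\psi_k})_{k\in X}$ directly and needs nothing beyond the definition of the operator norm, whereas the paper bounds $\vert\jbr{\psi_l,\tilde\psi_k}\vert w(k)w(l)^{-1}$ by the full weighted row sum $\sum_{m}\vert\jbr{\psi_m,\tilde\psi_k}\vert w(k)w(m)^{-1}$ and identifies its supremum with $\Vert G_{\tilde\psi,\psi}\Vert_{\mathcal{B}(\ell_w^\infty)}$ — both arguments produce the same bound $w(l)\,\Vert G_{\tilde\psi,\psi}\Vert_{\mathcal{B}(\ell_w^\infty)}$.
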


\begin{proof}
It suffices to show that $\big(\langle \psi_l, \tilde\psi_k \rangle \big)_{k \in X} \in \ell_w^\infty$ for each $l\in X$. This is indeed the case, since 
\begin{flalign}
\sup_{k\in X} \vert \langle \psi_l, \tilde\psi_k \rangle \vert w(k) &= w(l) \sup_{k\in X} \vert \langle \psi_l, \tilde\psi_k \rangle \vert w(k) w(l)^{-1} \notag \\
&\leq w(l) \sup_{k\in X} \sum_{m\in X} \vert \langle \psi_m, \tilde\psi_k \rangle \vert w(k) w(m)^{-1} \notag \\
&= w(l) \big\Vert G_{\tilde\psi,\psi} (w(m)^{-1})_{m\in X} \big\Vert_{\ell^{\infty}_w} \notag \\
&= w(l) \big\Vert G_{\tilde\psi,\psi} \big\Vert_{\B(\ell^{\infty}_w)} < \infty, \notag
\end{flalign}
where we used that $\Vert (w(m)^{-1})_{m\in X} \Vert_{\ell^{\infty}_w} =1$. 
\end{proof}

Note that the latter shows that $\jbr{\psi_l,\tilde\psi_k}$ and $\jbr{\psi_l, \tilde\psi_k}_{\Hc_w^\infty, \Hc^{00}}$ are the same for all $k,l\in X$.



\begin{lem}
    Let $(f_n)_{n = 1}^\infty \subseteq \Hc\cap \Hc_w^\infty$ be a $\Hc^\infty$-norm bounded sequence and fix $k \in X$. Then the sequence
    $$\brac{\jbr{f_n, \tilde\psi_l}\jbr{\psi_l, \tilde\psi_k}}_{l \in X}$$
    is dominated by an $\ell^1$ sequence.
    \label{Dominated by l^1}
\end{lem}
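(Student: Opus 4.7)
The plan is to produce an $n$-independent dominator in $\ell^1$ by combining the $\Hc_w^\infty$-norm bound on $(f_n)$ with assumption (2). First I would set $M := \sup_n \norm{f_n}_{\Hc_w^\infty} < \infty$; because each $f_n \in \Hc$, the pairings $\jbr{f_n,\tilde\psi_l}_{\Hc_w^\infty,\Hc^{00}}$ coincide with the ordinary Hilbert inner products $\jbr{f_n,\tilde\psi_l}$ (cf.\ the remark following Lemma~\ref{psilinHinfty}), so the definition of the norm immediately yields the uniform estimate $|\jbr{f_n,\tilde\psi_l}| \leq M/w(l)$ for all $n \in \N$ and all $l \in X$. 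Multiplying by $|\jbr{\psi_l,\tilde\psi_k}|$ gives the candidate dominator
$$a_l := M\,\frac{|\jbr{\psi_l,\tilde\psi_k}|}{w(l)},$$
so the whole task reduces to showing that this fixed row of the cross Gram matrix, rescaled by $1/w$, is summable.

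To obtain $(a_l)_{l\in X} \in \ell^1$, I would test the bounded operator $G_{\tilde\psi,\psi}$ on $\ell_w^\infty$ against a sequence engineered to reproduce the $k$-th row in absolute value. For the $k$ fixed by the lemma, define $c_l := \overline{\mathrm{sgn}(\jbr{\psi_l,\tilde\psi_k})}/w(l)$ with the convention $\mathrm{sgn}(0) := 0$, so that $|c_l|w(l) \leq 1$ (hence $\norm{c}_{\ell_w^\infty} \leq 1$) and $\jbr{\psi_l,\tilde\psi_k}c_l = |\jbr{\psi_l,\tilde\psi_k}|/w(l) \geq 0$. Since a priori $G_{\tilde\psi,\psi}c$ is only guaranteed to lie in $\ell_w^\infty$, I would first work with truncations $c \cdot \mathbf{1}_F$ for finite $F \subseteq X$; for each such $F$, the $k$-th coordinate is an honest finite sum and the boundedness of $G_{\tilde\psi,\psi}$ gives
$$w(k)\sum_{l \in F} \frac{|\jbr{\psi_l,\tilde\psi_k}|}{w(l)} = w(k)\bigl|\bigl(G_{\tilde\psi,\psi}(c \cdot \mathbf{1}_F)\bigr)_k\bigr| \leq \norm{G_{\tilde\psi,\psi}}_{\B(\ell_w^\infty)}.$$
Passing to the supremum over finite $F \subseteq X$ then yields $\sum_{l \in X} a_l \leq M\,\norm{G_{\tilde\psi,\psi}}_{\B(\ell_w^\infty)}/w(k) < \infty$, which concludes the argument.

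The only real obstacle I anticipate is this bookkeeping subtlety: assumption (2) asserts only that $G_{\tilde\psi,\psi}$ is a bounded operator on $\ell_w^\infty$, so one cannot directly evaluate $(G_{\tilde\psi,\psi} c)_k$ as an absolutely convergent row-sum — one must first truncate and then exhaust $X$ monotonically. Apart from that step, the proof is a one-line pointwise domination followed by a supremum, and the resulting dominator $(a_l)_{l\in X}$ is manifestly independent of $n$, as the lemma demands.
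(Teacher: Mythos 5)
Your proof is correct and follows essentially the same route as the paper: dominate $\vert\jbr{f_n,\tilde\psi_l}\vert\,\vert\jbr{\psi_l,\tilde\psi_k}\vert$ by the $n$-independent sequence $M\vert\jbr{\psi_l,\tilde\psi_k}\vert w(l)^{-1}$ and then use the boundedness of $G_{\tilde\psi,\psi}$ on $\ell_w^\infty$ to show that this fixed weighted row is summable, with the bound $M\Vert G_{\tilde\psi,\psi}\Vert_{\B(\ell_w^\infty)}w(k)^{-1}$. The only difference is that the paper invokes the bound of the supremum of weighted absolute row sums by $\Vert G_{\tilde\psi,\psi}\Vert_{\B(\ell_w^\infty)}$ directly, whereas you derive it explicitly via the sign-sequence/truncation argument --- a welcome extra detail, since testing against $(w(m)^{-1})_{m\in X}$ alone (as the paper does) does not literally produce the absolute values of the entries.
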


\begin{proof}
    Recall from the definition of $\Hil^{\infty}_w$ that
    $$\big\vert\jbr{f_n,\tilde\psi_l} \big\vert w(l) \leq \big\Vert C_{\tilde\psi}f_n \big\Vert_{\ell_w^\infty} = \norm{f_n}_{\Hc_w^\infty} \lesssim 1 \quad \forall n \in \N.$$
    This implies that
    $$\begin{aligned}\big\vert \jbr{f_n,\tilde\psi_l} \big\vert \big\vert\jbr{\psi_l,\tilde\psi_k} \big\vert &= \big\vert \jbr{f_n,\tilde\psi_l} \big\vert  w(l) \big\vert\jbr{\psi_l,\tilde\psi_k}\big\vert w(l)^{-1}\\
    &\lesssim \big\vert \jbr{\psi_l,\tilde\psi_k} \big\vert w(l)^{-1}.
    \end{aligned}$$
To show that the latter is in $\ell^1$ (with respect to $l$), we estimate similarly as in the proof of Lemma \ref{psilinHinfty}: 
\begin{flalign}
&\sum_{l\in X} \big\vert \jbr{\psi_l,\tilde\psi_k} \big\vert w(l)^{-1} \notag \\
&\leq w(k)^{-1} \sup_{m\in X} \sum_{l\in X} \big\vert \jbr{\psi_l,\tilde\psi_m} \big\vert w(m) w(l)^{-1} \notag \\ 
&\leq w(k)^{-1} \big\Vert G_{\tilde\psi,\psi} \big\Vert_{\Bc(\ell_w^\infty)}.\notag
\end{flalign}
\end{proof}

Now we are able to show that the Gramian matrix is the identity on the range of $C_{\tilde\psi}:\Hil_w^{\infty} \to \ell_w^{\infty}$.

\begin{thm}
    It holds $G_{\tilde\psi, \psi}|_{R(C_{\tilde\psi})} = \id_{R(C_{\tilde\psi})}$.
    \label{Restricted Gramian is identity}
\end{thm}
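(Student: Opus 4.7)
The plan is to reduce the statement to a termwise identity: namely, for each $f\in\Hc_w^\infty$ and each $k\in X$, to establish
\[
\jbr{f,\tilde\psi_k}_{\Hc_w^\infty,\Hc^{00}} \;=\; \sum_{l\in X} \jbr{f,\tilde\psi_l}_{\Hc_w^\infty,\Hc^{00}}\, \jbr{\psi_l,\tilde\psi_k},
\]
since the right-hand side is precisely $\bigl(G_{\tilde\psi,\psi}\, C_{\tilde\psi} f\bigr)_k$ and the left-hand side is $(C_{\tilde\psi} f)_k$. Once this holds for every $k$, we obtain $C_{\tilde\psi} f = G_{\tilde\psi,\psi} C_{\tilde\psi} f$, which is exactly the claim that $G_{\tilde\psi,\psi}$ acts as the identity on $R(C_{\tilde\psi})$.

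To produce this identity, I would first fix $f\in\Hc_w^\infty$ and invoke Proposition \ref{normbounded} to obtain a $\Vert\cdot\Vert_{\Hc_w^\infty}$-bounded sequence $(f_n)_{n=1}^\infty\subseteq \Hc\cap\Hc_w^\infty$ with $f_n\to f$ in the $\sigma(\Hc_w^\infty,\Hc^{00})$-topology. For each fixed $n$, since $f_n\in\Hc$ we have the frame reconstruction formula $f_n = \sum_{l\in X} \jbr{f_n,\tilde\psi_l}\psi_l$ with unconditional convergence in $\Hc$; applying the bounded functional $\jbr{\cdot,\tilde\psi_k}$ on $\Hc$ and interchanging sum and inner product by continuity yields
\[
\jbr{f_n,\tilde\psi_k} \;=\; \sum_{l\in X}\jbr{f_n,\tilde\psi_l}\,\jbr{\psi_l,\tilde\psi_k}.
\]
It then remains to pass to the limit $n\to\infty$ on both sides.

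The left-hand side converges to $\jbr{f,\tilde\psi_k}_{\Hc_w^\infty,\Hc^{00}}$ directly from the $\sigma(\Hc_w^\infty,\Hc^{00})$-convergence of $(f_n)$, together with the remark after Lemma \ref{psilinHinfty} ensuring that the extended and original sesquilinear forms agree on the frame elements. For the right-hand side I would invoke the dominated convergence theorem for series: Lemma \ref{Dominated by l^1} supplies an $\ell^1$-dominating sequence that is uniform in $n$ (this is where the $\ell_w^\infty$-boundedness of $G_{\tilde\psi,\psi}$ is really used), while pointwise (in $l$) convergence of the summands follows again from $f_n\to f$ in $\sigma(\Hc_w^\infty,\Hc^{00})$. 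Hence sum and limit may be interchanged, giving the desired identity.

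The main obstacle is the interchange of limit and infinite sum on the right-hand side, since weak$^\ast$-type convergence alone does not automatically allow this. This is precisely the reason Lemma \ref{Dominated by l^1} was proved beforehand, and the role of the key step in my plan is to cash in that lemma via dominated convergence; every other ingredient is a straightforward application of results already available in the excerpt.
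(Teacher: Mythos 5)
Your proposal is correct and follows essentially the same route as the paper's proof: approximate $f$ by the norm-bounded sequence from Proposition \ref{normbounded}, apply frame reconstruction to each $f_n\in\Hc$, and interchange the limit in $n$ with the sum over $l$ via dominated convergence using Lemma \ref{Dominated by l^1}. The only difference is presentational (you pass the limit through an identity already established for each $f_n$, whereas the paper unwinds $(G_{\tilde\psi,\psi}C_{\tilde\psi}f)_k$ from the inside out), which does not change the argument.
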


\begin{proof}
Let $f \in \Hc_w^\infty$. By Proposition \ref{normbounded}, there exists a $\Hc_w^\infty$-norm bounded sequence $(f_n)_{n = 1}^\infty \subseteq \Hc\cap \Hc_w^\infty$ such that 
    $$\limn^{\sigma(\Hc^\infty,\Hc^{00})} f_n = f.$$
    Our goal is to show that
    $$(G_{\tilde\psi,\psi}C_{\tilde\psi}f)_k = (C_{\tilde\psi}f)_k \quad \forall k \in X.$$
    The idea is to replace $f$ by the limit of the sequence $(f_n)_{n = 1}^\infty$, then apply dominated convergence to swap the limit with the sum. Finally, we apply the frame reconstruction formula (\ref{framerec}) to $f_n$. To make the computation simpler, we start from the inside and move step by step to the outside. First, note that
    $$(C_{\tilde\psi}f)_l = \jbr{f, \tilde\psi_l}_{\Hc_w^\infty, \Hc^{00}} = \limn\jbr{f_n, \tilde\psi_l}.$$
    Since $\psi_l \in \Hc^{\infty}_w$ by Lemma \ref{psilinHinfty}, this yields
    $$(C_{\tilde\psi}f)_l\psi_l = \brac{\limn\jbr{f_n,\tilde\psi_l}}\psi_l = \limn^{\Hc_w^\infty}\brac{\jbr{f_n,\tilde\psi_l}\psi_l}.$$
    Then
    $$\begin{aligned}\jbr{(C_{\tilde\psi}f)_l\psi_l, \tilde\psi_k}_{\Hc_w^\infty, \Hc^{00}} &= \ljbr{\limn^{\Hc_w^\infty}\brac{\jbr{f_n,\tilde\psi_l}\psi_l}, \tilde\psi_k}_{\Hc_w^\infty,\Hc^{00}}\\
    &= \limn\jbr{\jbr{f_n,\tilde\psi_l}\psi_l,\tilde\psi_k}\\
    &= \limn \jbr{f_n, \tilde\psi_l}\jbr{\psi_l, \tilde\psi_k}.
    \end{aligned}$$
    Now, observe that
    $$\begin{aligned}(G_{\tilde\psi,\psi}C_{\tilde\psi}f)_k &= \sum_{l\in X} \langle \psi_l, \tilde\psi_k \rangle (C_{\tilde\psi}f)_l \\    
    &= \sum_{l \in X}\jbr{(C_{\tilde\psi}f)_l\psi_l,\tilde\psi_k}_{\Hc_w^\infty, \Hc^{00}}\\
    &= \sum_{l \in X}\limn \jbr{f_n, \tilde\psi_l}\jbr{\psi_l, \tilde\psi_k} = (*).
    \end{aligned}$$
    By Lemma \ref{Dominated by l^1}, $\big(\jbr{f_n, \tilde\psi_l}\jbr{\psi_l, \tilde\psi_k}\big)_{l \in X}$ is dominated by an $\ell^1$-sequence for each $n \in \N$, so we can interchange the sum and the limit using dominated convergence and obtain
    $$\begin{aligned}(*)&=\limn\sum_{l \in X} \jbr{f_n, \tilde\psi_l}\jbr{\psi_l, \tilde\psi_k}\\
    &= \limn \ljbr{\underbrace{\sum_{l \in X}\jbr{f_n,\tilde\psi_l}\psi_l}_{= f_n \text{ (frame reconstruction)}},\tilde\psi_k}\\
    &= \limn\jbr{f_n,\tilde\psi_k} = \jbr{f,\tilde\psi_k}_{\Hc_w^\infty, \Hc^{00}} = (C_{\tilde\psi}f)_k.\end{aligned}$$
    This shows that $G_{\tilde\psi,\psi} C_{\tilde\psi} f = C_{\tilde\psi} f$ for all $f \in \Hc_w^\infty$.
\end{proof}

Next, we show that the range of the coefficient operator is the only set on which the Gramian matrix acts as the identity.

\begin{lem}
    Let $\alpha \in \ell_w^\infty$ such that $G_{\tilde\psi,\psi}\alpha = \alpha$. Then $\alpha \in R(C_{\tilde\psi})$.
\end{lem}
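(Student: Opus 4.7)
My plan is to exhibit $\alpha$ as $C_{\tilde\psi}f$ by constructing $f$ as the $\sigma(\overline\Hc,\Hc^{00})$-limit of the finite partial sums of the formal series $\sum_{l \in X} \alpha_l \psi_l$. Concretely, I fix an enumeration $X = \{l_1,l_2,\ldots\}$ and set $f_n := \sum_{j=1}^n \alpha_{l_j}\psi_{l_j} \in \Hc$ for $n\in\N$. The goal is to show first that $(f_n)$ is $\sigma(\Hc,\Hc^{00})$-Cauchy, so that it converges to some $f \in \overline\Hc$, and second that $(f_n)$ itself witnesses $f \in \Hc_w^\infty$ with $C_{\tilde\psi}f = \alpha$.

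Everything is driven by the same row-sum estimate used in the proof of Lemma \ref{psilinHinfty}. Using $\abs{\alpha_l} \leq \norm{\alpha}_{\ell_w^\infty} w(l)^{-1}$ and the $\Bc(\ell_w^\infty)$-boundedness of $G_{\tilde\psi,\psi}$, I compute for every $n, k$
$$\big\vert \jbr{f_n,\tilde\psi_k}\big\vert w(k) \leq w(k) \sum_{l \in X} \abs{\alpha_l}\,\big\vert\jbr{\psi_l,\tilde\psi_k}\big\vert \leq \norm{\alpha}_{\ell_w^\infty}\norm{G_{\tilde\psi,\psi}}_{\Bc(\ell_w^\infty)}.$$
This delivers the uniform bound $\vert\jbr{f_n,\tilde\psi_k}\vert w(k) \lesssim 1$ required by the definition of $\Hc_w^\infty$, and it simultaneously shows that the series $\sum_{l \in X} \alpha_l \jbr{\psi_l,\tilde\psi_k}$ is absolutely convergent for each fixed $k$, with value $(G_{\tilde\psi,\psi}\alpha)_k = \alpha_k$ by hypothesis. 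Hence $\limn \jbr{f_n,\tilde\psi_k} = \alpha_k$ for every $k \in X$, and by linearity in the first slot of the inner product the limit $\limn \jbr{f_n,v}$ exists for every $v$ in the finite span $\Hc^{00} = \spanl(\tilde\psi_k)_{k \in X}$. Therefore $(f_n)$ is $\sigma(\Hc,\Hc^{00})$-Cauchy.

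By the construction of the completion, $(f_n)$ converges to some $f \in \overline\Hc$ with respect to $\sigma(\overline\Hc,\Hc^{00})$, and the continuous extension of the evaluation functionals $l_v$ (applied at $v = \tilde\psi_k$) gives $\jbr{f,\tilde\psi_k}_{\overline\Hc,\Hc^{00}} = \limn \jbr{f_n,\tilde\psi_k} = \alpha_k$ for all $k$. Together with the uniform bound from the previous paragraph, the sequence $(f_n) \subseteq \Hc$ satisfies both conditions defining $\Hc_w^\infty$, so $f \in \Hc_w^\infty$ and $C_{\tilde\psi}f = \alpha$, proving $\alpha \in R(C_{\tilde\psi})$. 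The one step requiring slight care is the interchange of the partial-sum limit with the series defining $(G_{\tilde\psi,\psi}\alpha)_k$, but this is already taken care of by the absolute convergence exhibited above, which also makes the limit $f$ independent of the chosen enumeration of $X$.
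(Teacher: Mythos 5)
Your proof is correct and takes essentially the same route as the paper: both build the finite partial sums $f_n=\sum_{l\in F_n}\alpha_l\psi_l$, use the weighted row-sum estimate from Lemma \ref{psilinHinfty} together with $\abs{\alpha_l}\leq\norm{\alpha}_{\ell_w^\infty}w(l)^{-1}$ to obtain the uniform bound and absolute convergence, deduce that $(f_n)$ is $\sigma$-Cauchy, and identify the limit's coefficients with $\alpha$ via $G_{\tilde\psi,\psi}\alpha=\alpha$. The only cosmetic difference is that you fix an enumeration of $X$ where the paper uses a nested exhaustion by finite subsets.
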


\begin{proof}
    Choose a nested sequence $(F_n)_{n\in\N}$ of finite subsets of $X$ such that $\bigcup_{n \in \N} F_n = X$. Let
    $$f_n := \sum_{l \in F_n} \alpha_l \psi_l \in \Hc \cap \Hc_w^\infty.$$
    Observe that for each $k\in X$
    \begin{flalign}\label{XX}
        \big\vert \jbr{f_n, \tilde\psi_k}\big\vert w(k) &= \abs{\ljbr{\sum_{l \in F_n}\alpha_l\psi_l,\tilde\psi_k}}w(k)\notag \\
        &\leq \sum_{l \in F_n}|\alpha_l|\big\vert \jbr{\psi_l,\tilde\psi_k} \big\vert w(k)\notag\\
        &\leq \sum_{l \in X}|\alpha_l|w(l)w(l)^{-1}\big\vert \jbr{\psi_l,\tilde\psi_k} \big\vert w(k)\notag\\
        &\leq \norm\alpha_{\ell_w^\infty}\sum_{l \in X}w(l)^{-1} \big\vert \jbr{\psi_l,\tilde\psi_k}\big\vert w(k)\\
        &\leq \norm\alpha_{\ell_w^\infty} \big\Vert G_{\tilde\psi,\psi} \big\Vert_{\Bc(\ell_w^{\infty})}.\notag
    \end{flalign}
    This implies that $(f_n)_{n = 1}^\infty$ is $\Hc_w^\infty$-bounded. Since the sum in (\ref{XX}) converges absolutely, we can also easily deduce, that $(f_n)_{n = 1}^\infty$ is a Cauchy sequence with respect to $\sigma(\Hc_w^\infty,\Hc^{00})$, since for $m \geq n$
    $$\begin{aligned}\big\vert \jbr{f_m-f_n, \tilde\psi_k}\big\vert w(k) &\lesssim \sum_{l \in F_m \setminus F_n}w(l)^{-1}\big\vert \jbr{\psi_l,\tilde\psi_k} \big\vert w(k)\\
    &\leq \sum_{l \in X \setminus F_n}w(l)^{-1}\big\vert\jbr{\psi_l,\tilde\psi_k} \big\vert w(k)\\    &\stackrel{n\to\infty}\to 0.
    \end{aligned}$$
    Also,
    $$\begin{aligned}
        \alpha_k = (G_{\tilde\psi,\psi}\alpha)_k &= \lim_{n\to \infty} \sum_{l\in F_n} \alpha_l \langle \psi_l,\tilde\psi_k\rangle \\
        &= \limn\ljbr{\sum_{l \in F_n}\alpha_l\psi_l,\tilde\psi_k}\\
        &= \limn \jbr{f_n,\tilde\psi_k}.
    \end{aligned}$$
    Now let
    $$f := \limn^{\sigma(\Hc_w^\infty, \Hc^{00})} f_n \in \Hc_w^\infty.$$
    Then, we see that
    $$\begin{aligned}
        C_{\tilde\psi}f = \brac{\jbr{f, \tilde\psi_k}}_{k \in X} &= \brac{\limn\jbr{f_n, \tilde\psi_k}}_{k \in X}\\
        &= (\alpha_k)_{k \in X} = \alpha.
    \end{aligned}$$
    Thus we conclude that $\alpha \in R(C_{\tilde\psi})$.
\end{proof}

From the last two results, we obtain the following theorem:

\begin{thm}\label{isom}
    Let $V:= \{\alpha \in \ell_w^\infty: \alpha = G_{\tilde\psi,\psi}\alpha\}$. Then $C_{\tilde\psi}:\Hil^{\infty}_w \to V$ is an isometric isomorphism.
\end{thm}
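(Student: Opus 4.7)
The plan is to recognize this theorem as a direct synthesis of the three preceding results, so essentially no new work is required; I would just assemble the pieces and verify that $V$ is the appropriate codomain.

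First, I would recall the Corollary following the injectivity proposition, which states that $C_{\tilde\psi} : \Hc_w^\infty \to \ell_w^\infty$ is a linear isometry with respect to $\Vert \cdot \Vert_{\Hc_w^\infty}$ and $\Vert \cdot \Vert_{\ell_w^\infty}$. In particular, $C_{\tilde\psi}$ is injective, and viewed as a map onto its range $R(C_{\tilde\psi})$ (equipped with the subspace norm inherited from $\ell_w^\infty$), it is an isometric bijection. So the content of the theorem reduces to the single identity $R(C_{\tilde\psi}) = V$.

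The inclusion $R(C_{\tilde\psi}) \subseteq V$ is exactly Theorem \ref{Restricted Gramian is identity}: every $\alpha \in R(C_{\tilde\psi})$ is of the form $\alpha = C_{\tilde\psi} f$ for some $f \in \Hc_w^\infty$, and the theorem says $G_{\tilde\psi,\psi} \alpha = \alpha$, so $\alpha \in V$. The reverse inclusion $V \subseteq R(C_{\tilde\psi})$ is precisely the statement of the lemma immediately preceding the theorem, which constructs, for a given $\alpha \in \ell_w^\infty$ satisfying $G_{\tilde\psi,\psi}\alpha = \alpha$, an element $f \in \Hc_w^\infty$ with $C_{\tilde\psi} f = \alpha$.

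Combining these two inclusions yields $R(C_{\tilde\psi}) = V$, and together with the isometry property this proves that $C_{\tilde\psi} : \Hc_w^\infty \to V$ is an isometric isomorphism. There is no real obstacle here; the only small subtlety worth flagging is that $V$ is well-defined as a normed subspace of $\ell_w^\infty$ (it is the kernel of the bounded operator $G_{\tilde\psi,\psi} - \id$ on $\ell_w^\infty$, hence a closed linear subspace), which makes the statement of isometric isomorphism unambiguous.
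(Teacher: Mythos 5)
Your proposal is correct and matches the paper's reasoning exactly: the paper also obtains this theorem by combining the isometry corollary with Theorem \ref{Restricted Gramian is identity} (giving $R(C_{\tilde\psi})\subseteq V$) and the preceding lemma (giving $V\subseteq R(C_{\tilde\psi})$). Your added remark that $V$ is a closed subspace is a harmless bonus the paper defers to the subsequent completeness theorem.
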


Finally, we can show the completeness of $\Hc_w^\infty$.

\begin{thm}
    $V$ is a closed subspace of $\ell_w^\infty$. Consequently, $\Hc_w^\infty$ is a Banach space.
\end{thm}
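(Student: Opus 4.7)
The plan is to identify $V$ as the kernel of a bounded operator and then transfer completeness along the isometric isomorphism provided by Theorem \ref{isom}.

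First, I would observe that assumption (2) tells us $G_{\tilde\psi,\psi} \in \Bc(\ell_w^\infty)$, so the operator $\id_{\ell_w^\infty} - G_{\tilde\psi,\psi}$ is also bounded on $\ell_w^\infty$, and in particular continuous. By definition
$$V = \set{\alpha \in \ell_w^\infty : (\id_{\ell_w^\infty} - G_{\tilde\psi,\psi})\alpha = 0} = \ker(\id_{\ell_w^\infty} - G_{\tilde\psi,\psi}),$$
which is the preimage of the singleton $\set{0}$ under a continuous linear map. Hence $V$ is a closed linear subspace of the Banach space $(\ell_w^\infty, \norm{\cdot}_{\ell_w^\infty})$, and therefore $V$ is itself a Banach space with respect to the inherited norm.

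Second, to deduce that $\Hc_w^\infty$ is a Banach space, I would invoke Theorem \ref{isom}, which states that $C_{\tilde\psi} : \Hc_w^\infty \to V$ is an isometric isomorphism. Completeness transfers along isometric isomorphisms: if $(f_n)_{n=1}^\infty \subseteq \Hc_w^\infty$ is Cauchy with respect to $\norm{\cdot}_{\Hc_w^\infty}$, then $(C_{\tilde\psi} f_n)_{n=1}^\infty$ is Cauchy in $V$, hence converges to some $\alpha \in V$, and $f := C_{\tilde\psi}^{-1}\alpha \in \Hc_w^\infty$ satisfies $\norm{f_n - f}_{\Hc_w^\infty} = \norm{C_{\tilde\psi} f_n - \alpha}_{\ell_w^\infty} \to 0$.

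There is no real obstacle here: the content of the result is entirely packaged in the previously established facts that $G_{\tilde\psi,\psi}$ is bounded on $\ell_w^\infty$ and that $C_{\tilde\psi}$ is an isometric isomorphism onto $V$. The only point worth emphasizing in writing is that the continuity of $G_{\tilde\psi,\psi}$ on $\ell_w^\infty$ is used for closedness of $V$; without assumption (2) this step would fail.
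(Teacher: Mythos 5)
Your proof is correct and follows essentially the same route as the paper: the paper verifies closedness of $V$ by a direct sequential argument using the boundedness of $G_{\tilde\psi,\psi}$ on $\ell_w^\infty$, which is just the unpacked version of your observation that $V = \ker(\id_{\ell_w^\infty} - G_{\tilde\psi,\psi})$ is the kernel of a bounded operator, and then both arguments transfer completeness via the isometric isomorphism of Theorem \ref{isom}.
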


\begin{proof}
    Let $(\alpha^n)_{n = 1}^\infty \subseteq V$ be a sequence converging to some $\alpha \in \ell_w^\infty$. Since $G_{\tilde\psi,\psi}$ is bounded on $\ell^{\infty}_{w}$, we get
    $$G_{\tilde\psi,\psi}\alpha = G_{\tilde\psi,\psi}\limn^{\ell_w^\infty}\alpha^n = \limn^{\ell_w^\infty}G_{\tilde\psi,\psi}\alpha^n = \limn^{\ell_w^\infty}\alpha^n = \alpha.$$
    This implies that $V$ is a closed subspace of $\ell_w^\infty$. Since, by Theorem \ref{isom}, $V$ is isometrically isomorphic to $\Hc_w^\infty$, the latter is complete as well.
\end{proof}


\bibliographystyle{abbrv}
\bibliography{biblioall}

\end{document}